\newcommand{\F}{\mathbb{F}}
\newcommand{\Q}{\mathbb{Q}}
\newcommand{\Z}{\mathbb{Z}}
\newcommand{\T}{\mathbb{T}}
\newcommand{\rhobar}{{\overline{\rho}}}
\newcommand{\calO}{\mathcal{O}}
\newcommand{\fp}{\mathfrak{p}}
\newcommand{\fq}{\mathfrak{q}}
\newcommand{\fN}{\mathfrak{N}}
\newcommand{\fL}{\mathfrak{L}}
\DeclareMathOperator{\Frob}{Frob}
\DeclareMathOperator{\Norm}{Norm}
\DeclareMathOperator{\Tr}{Tr}
\newcommand{\vv}{\upsilon}
\newcommand{\GL}{\operatorname{GL}}
\numberwithin{equation}{section}
\newtheorem{theorem}{Theorem}[section]
\newtheorem{corollary}[theorem]{Corollary}
\newtheorem{proposition}[theorem]{Proposition}
\theoremstyle{definition}
\theoremstyle{remark}
\newtheorem{remark}[theorem]{Remark}
\definecolor{darkgreen}{rgb}{0,0.5,0}
\begin{document}

\title{Revisiting the Fermat-type equation $x^{13} + y^{13} = 3z^7$}

\author{Nicolas Billerey}
\address{Universit\'e Clermont Auvergne, CNRS, LMBP, F-63000 Clermont-Ferrand, France.
}
\email{nicolas.billerey@uca.fr}

\author{Imin Chen}

\address{Department of Mathematics, Simon Fraser University\\
Burnaby, BC V5A 1S6, Canada } \email{ichen@sfu.ca}

\author{Lassina Demb\'el\'e}
\address{Department of Mathematics, King's College London, Strand, London WC2R 2LS, UK}
\email{lassina.dembele@kcl.ac.uk}

\author{Luis Dieulefait}

\address{Departament de Matem\`atiques i Inform\`atica,
Universitat de Barcelona,
G.V. de les Corts Catalanes 585,
08007 Barcelona, Spain}
\email{ldieulefait@ub.edu}

\author{Nuno Freitas}
\address{
Instituto de Ciencias Matem\'aticas, CSIC, 
Calle Nicol\'as Cabrera
13--15, 28049 Madrid, Spain}
\email{nuno.freitas@icmat.es}

\date{\today}

\keywords{Fermat equations, modular method, reducible representations}
\subjclass[2020]{Primary 11D41; Secondary 11F80.}

\thanks{Billerey was supported by the ANR-23-CE40-0006-01 Gaec project. Chen was supported by NSERC Discovery Grant RGPIN-2017-03892. Dieulefait and Freitas were partly supported by the PID 2022-136944NB-I00 grant of the MICINN (Spain).}


\begin{abstract} 
We solve the Fermat-type equation
\[ x^{13} + y^{13} = 3 z^7, \qquad \gcd(x,y,z) = 1 \]
combining a unit sieve, the multi-Frey modular method, level raising, computations of systems of eigenvalues modulo 7 over a totally real field, and results for reducibility of certain Galois representations.
\end{abstract}

\maketitle

\section{Introduction}

In \cite{BCDDF} we gave three different and independent proofs of Theorem~\ref{thm:main} below. Our purpose with that paper was to not only complete the missing case $p=7$ in \cite[Theorem~2]{BCDF} but also to add new tools to the modular method that allow to deal with small concrete exponents that often survive the standard elimination techniques.

\begin{theorem}\label{thm:main}
There are no non-trivial primitive integer solutions to the equation
\begin{equation}\label{fte:13-13-7}
x^{13} + y^{13} = 3 z^7.
\end{equation}
\end{theorem}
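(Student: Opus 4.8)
The plan is to attack \eqref{fte:13-13-7} by the multi-Frey modular method for the generalized Fermat equation of signature $(13,13,7)$, using Darmon-style Frey abelian varieties of $\GL_2$-type attached to the totally real field $K=\Q(\zeta_{13})^{+}$ and their mod-$7$ Galois representations. Since $7$ is inert in $\Q(\zeta_{13})$, the prime of $K$ above $7$ has residue field $\F_{7^{6}}$, and the automorphic objects that arise are Hilbert newforms over the totally real sextic field $K$ whose Hecke eigenvalues are to be understood modulo $7$.

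First I would carry out a descent. For a nontrivial primitive solution $(a,b,c)$, write $a^{13}+b^{13}=(a+b)\,\phi(a,b)$ with $\phi(a,b)=(a^{13}+b^{13})/(a+b)$; since $\gcd(a+b,\phi(a,b))$ divides $13$ and $(a+b)\,\phi(a,b)=3c^{7}$, split into cases according to whether $13\mid c$ and according to which factor carries the prime $3$. In each case $a+b$ and $\phi(a,b)$ are, up to controlled powers of $3$ and $13$, seventh powers of integers; a unit sieve — testing these relations modulo several auxiliary primes and keeping track of the finitely many units (equivalently, twist classes) involved — then eliminates several cases and fixes the precise shape of the solution in those that remain.

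Next, to each surviving case I attach the corresponding Frey variety over $K$, compute its conductor — whose ramification is supported on the primes of $K$ above $2$, $3$ and $13$ — and, invoking modularity and level lowering for this signature as in \cite{BCDF}, conclude that the attached mod-$7$ representation $\overline{\rho}$ of $G_{K}$, if irreducible, comes from a Hilbert newform over $K$ of one of finitely many explicit levels. Computing these spaces of systems of mod-$7$ Hecke eigenvalues over $K$, one eliminates most newforms by comparing the image of $\Frob_{\mathfrak q}$ at several auxiliary primes $\mathfrak q$ with the congruences imposed by the Frey variety; the remaining newforms are then killed by introducing a second Frey variety attached to the same solution (the multi-Frey step) together with level raising, which forces $\overline{\rho}$ to be congruent modulo $7$ to forms at larger levels as well, and then combining the two sets of constraints.

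The cases that survive this elimination are exactly those in which $\overline{\rho}$ is forced to be reducible, and this is where the main difficulty lies: because the exponent $7$ is small, the usual irreducibility inputs are unavailable, so one must both prove that the surviving cases genuinely force $\overline{\rho}$ to be reducible and then treat the reducible representations directly. Such a $\overline{\rho}$ is an extension of two characters of $G_{K}$ that are ramified only above $2$, $3$, $7$ and $13$ and whose product is prescribed, so there are only finitely many possibilities; ruling them all out — using the conductor and cyclotomic constraints, the finite/flat condition at $7$, and the arithmetic of $K$ (its units and class group) — finishes the proof. This last analysis, together with the preceding mod-$7$ Hilbert modular form computation over a degree-$6$ field, is the most substantial part of the argument.
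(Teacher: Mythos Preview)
Your plan has the right high-level ingredients—unit sieve, multi-Frey method, mod-$7$ Hilbert eigensystems, and a reducibility analysis—but two concrete points diverge from the paper in ways that matter. First, the Frey objects are elliptic curves $E/\Q(\sqrt{13})$ and $F/K$ with $K$ the \emph{cubic} real subfield of $\Q(\zeta_{13})$, not higher-dimensional Darmon varieties over the sextic field $\Q(\zeta_{13})^{+}$. The paper already notes (Remark~\ref{rem:F}) that the relevant newform space over the cubic field has dimension in the thousands and is barely tractable; a computation over a degree-$6$ field along the lines you sketch would be out of reach, and is not how the argument runs.

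Second, and more importantly, your reducibility logic is inverted. You write that ``the usual irreducibility inputs are unavailable'' and propose to handle the case where the Frey representation $\overline{\rho}$ is reducible by a direct character analysis. In fact irreducibility of $\rhobar_{F_{a,b},7}$ is an \emph{input}, already established in \cite{BCDF}. The paper's endgame is the opposite: it shows that the only newforms $f$ surviving the level-raising congruences (forced on $f$ by the sieve via $q\mid a+b$ for $q\in\fL$) have \emph{reducible} $\rhobar_{f,\fp_7}$ (Theorem~\ref{T:reducibleSystems}), and this contradicts the isomorphism $\rhobar_{F_{a,b},7}\simeq\rhobar_{f,\fp_7}$ immediately—there is no reducible case of the Frey representation to treat. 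You also miss the global architecture: Theorem~\ref{thm:main} is reduced to the congruences $4\mid a+b$ and $13\mid a+b$ (Theorem~\ref{T:thm7}), after which \cite[Theorem~4.1]{BCDDF} finishes; the sieve alone gives $4\mid a+b$, and the entire level-raising and mod-$7$ eigensystem machinery exists solely to deal with the single unit $\epsilon_0$ obstructing $13\mid a+b$.
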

Recall that a solution~\((x, y, z) = (a, b, c)\) of equation~\eqref{fte:13-13-7} is \emph{non-trivial} if it satisfies~\(abc \neq 0\) and we call it \emph{primitive} if~\(\gcd(a, b, c) = 1\). We recently found a bug in the code used for the proof of this theorem given in~\cite[\S 7]{BCDF} which implements a unit sieve. Correcting the bug revealed a fundamental obstruction to complete the proof using the unit sieve alone (see \S\ref{sec:descent-unit} for details), and the objective of this paper is to complete this proof. To achieve this, we will introduce new techniques for the elimination step of the modular method that apply when dealing with small exponents. In particular, our methods allow us to avoid prohibitively large spaces of Hilbert newforms (see Remark~\ref{rem:F}).

It is worth remarking the importance of having more tools  within  the modular method to deal with small exponents, since these tools tend to apply to various (small) exponents
and being less computationally expensive than more classical methods. For example,
it is natural to wonder whether one could use Chabauty methods to prove Theorem~\ref{thm:main} by determining the rational points on the hyperelliptic curves associated with \eqref{fte:13-13-7} from the constructions in \cite{BDF25,DahmenSiksek}. Note that, even in case of success, this would depend on GRH as is the case in these works.

Following the strategy initiated in~\cite{BCDF} and~\cite{BCDDF}, a crucial step to prove Theorem~\ref{thm:main} is to extend~\cite[Theorem 7]{BCDF} to the exponent $p=7$.
Once this is achieved, Theorem~\ref{thm:main} follows immediately from~\cite[Theorem~4.1]{BCDDF}.
Thus we must prove
the following.

\begin{theorem}\label{thm:discard-g}
Let~$(a,b,c)$ be a non-trivial primitive solution to equation~\eqref{fte:13-13-7}.

Then $13 \mid a+b$ and $4 \mid a+b$.
\label{T:thm7}
\end{theorem}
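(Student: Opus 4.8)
The plan is to run the modular method for the exponent $p=7$ applied to a Frey curve (or a multi-Frey system) attached to a putative non-trivial primitive solution $(a,b,c)$ of \eqref{fte:13-13-7}, following the blueprint of \cite{BCDF} and \cite{BCDDF} but pushing through the small exponent $p=7$. First I would recall the Frey curve construction from those papers: after the unit sieve / descent step one may assume a convenient normalization of $(a,b,c)$, and one attaches to it a Frey elliptic curve $E=E_{a,b,c}$ over $\Q$ (or over the real subfield of $\Q(\zeta_{13})$ as in the multi-Frey approach) whose mod-$7$ Galois representation $\rhobar_{E,7}$ has controlled ramification: it is unramified outside $\{2,3,7,13\}$, and its conductor exponents at $2$ and $13$ are pinned down by the shape of the solution. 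The key arithmetic input I want to extract is precisely about the behaviour at $13$ and at $2$: showing $13\mid a+b$ amounts to showing $\rhobar_{E,7}$ is unramified (good or multiplicative in the appropriate sense) at $13$ rather than having the ``other'' conductor at $13$, and showing $4\mid a+b$ is the analogous $2$-adic refinement.

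Second I would run the standard elimination: via level lowering (Ribet) $\rhobar_{E,7}$ is modular of some small level $N$ dividing a fixed integer supported on $\{2,3,13\}$ (the precise $N$ depending on which case at $2$ and $13$ we are in), so it arises from a classical newform or, in the multi-Frey/Hilbert setting, from a Hilbert newform over the totally real field $\Q(\zeta_{13})^+$. Then, case by case over the possible levels, I would try to discard all newforms $f$ with $\rhobar_{E,7}\simeq \rhobar_{f,\fp}$ by the usual congruence-of-traces argument: comparing $a_\ell(E)$ with $a_\ell(f)$ modulo (a prime above) $7$ for several auxiliary primes $\ell$, using that $a_\ell(E)$ ranges over an explicit set determined by the Frey curve. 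The cases surviving this elementary elimination are exactly the ones the paper advertises as needing new ideas — and here I expect to invoke the new techniques mentioned in the introduction: level raising to move to a more favourable level, direct computation of systems of Hecke eigenvalues modulo $7$ over the totally real field (to handle the residual forms that genuine classical computations can't reach), and the reducibility results for the relevant mod-$7$ representations to rule out the reducible/Eisenstein case.

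Third, to get the exact divisibilities $13\mid a+b$ and $4\mid a+b$ rather than merely a contradiction, I would organize the argument contrapositively: assume one of the ``bad'' local conditions holds at $13$ (resp. at $2$), so $E$ has the corresponding larger conductor there; this places $\rhobar_{E,7}$ at a specific level, and I would eliminate all newforms at that level (after level raising where needed), thereby deriving a contradiction — hence that bad case cannot occur and the divisibility must hold. Concretely this splits into: (i) the case analysis at $13$, where $v_{13}(a+b)=0$ would be excluded, forcing $13\mid a+b$; and (ii) the case analysis at $2$, refining $2\mid a+b$ (which should already follow from parity considerations on \eqref{fte:13-13-7}) to $4\mid a+b$ by excluding $v_2(a+b)=1$. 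The reducibility input of \cite{...}-type is what lets us close off the Eisenstein/reducible possibility for $\rhobar_{E,7}$, which otherwise obstructs a clean elimination at $p=7$.

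The main obstacle I anticipate is exactly the one flagged in the introduction: at $p=7$ the newform spaces (especially the Hilbert newform spaces over $\Q(\zeta_{13})^+$, a degree-$6$ totally real field) that arise after level lowering are prohibitively large to enumerate and eliminate directly. The real work is therefore (a) using level raising cleverly to relocate the problem to levels where a \emph{mod-$7$} eigensystem computation is feasible even though a characteristic-zero newform computation is not, and (b) proving the reducibility statements needed to dispose of the residually reducible locus — since for small $p$ one cannot rely on irreducibility being automatic and must argue it (or argue directly that the reducible case leads to no solution). Getting these two components to interlock so that every surviving newform in every surviving case is eliminated is where the bulk of the proof's difficulty lies; the trace-comparison elimination itself, the conductor computations for the Frey curve, and the final deduction of the divisibilities are comparatively routine once the framework of \cite{BCDF} and \cite{BCDDF} is in place.
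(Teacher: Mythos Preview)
Your proposal has the right closing architecture --- contrapositive case split, mod-$7$ eigensystem computation over the totally real field, and a reducibility argument to kill the surviving forms --- but it is missing the mechanism that makes the elimination tractable, and without it the argument stalls exactly where Remark~\ref{rem:F} says it does. You treat the unit sieve as a preliminary normalization, but in the paper it does the real work twice. First, the conclusion $4\mid a+b$ is obtained \emph{entirely} from the unit sieve of Section~\ref{sec:descent-unit} (classical descent in $\Z[\zeta_{13}]$ combined with modular constraints from the curve $E/\Q(\sqrt{13})$), not from a newform elimination as you suggest. Second, and more critically, after assuming $4\mid a+b$ and $13\nmid a+b$, the sieve leaves a single surviving unit $\epsilon_0$, and sieving further against $\epsilon_0$ forces $q\mid a+b$ for every $q$ in the set $\fL=\{5,17,19,23,29,37,41,43,61,83,89\}$. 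These auxiliary divisibilities are the whole point: they give the Frey curve $F_{a,b}$ over the \emph{cubic} subfield $K$ (not the degree-$6$ field $\Q(\zeta_{13})^+$ you mention) multiplicative reduction at all primes above $\fL$, so any newform $f$ in~\eqref{E:isos2} must satisfy the simultaneous level-raising congruences~\eqref{E:levelRaising}. That constraint is what cuts the $\F_7$-eigensystems at level $2\cdot 3\cdot\fq_{13}^2$ down to four, all of which are then shown to be reducible.

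So your picture of ``level raising to relocate to a more favourable level'' is inverted: the paper never changes the level, it uses the level-raising congruence as an \emph{elimination criterion} at the fixed level $2\cdot 3\cdot\fq_{13}^2$. And the standard trace-comparison you fall back on is precisely what Remark~\ref{rem:F} rules out: that space contains $73$ newforms with coefficient fields of degree up to $153$, and norm-based elimination fails to discard them. Without the unit sieve producing the auxiliary divisibilities at the primes in $\fL$, you have no handle on these forms, and the $13\mid a+b$ half of the theorem does not go through.
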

The proof of this theorem combines several ingredients. In Section~\ref{sec:descent-unit}, the conclusion $4 \mid a+b$ follows from a combination of a unit sieve with the multi-Frey modular method; this sieve also yields a unit obstruction to the conclusion $13 \mid a+b$.
To deal with this obstruction, we first introduce in Section~\ref{sec:levelraising} a method to force auxiliary level lowering/raising primes in the Frey curve; then in Sections~\ref{S:computation}~and~\ref{sec:reducible} we combine computations of systems of eigenvalues mod~$7$ and inductively show that the residual representations of the Frey curve must level lower to reducible representations, yielding a contradiction.

\paragraph*{\bf Electronic resources} For the computations needed in this paper, we used  {\tt Magma} \cite{magma}. Our programs are posted in \cite{programs}, where there is included descriptions, output transcripts, timings, and machines used.

\paragraph*{\bf Acknowledgments}   Freitas is grateful to Max Planck Institute for Mathematics in Bonn for its hospitality and financial support.

\section{A modular unit sieve}
\label{sec:descent-unit}
In this section, we  attempt to prove Theorem~\ref{thm:discard-g}
by combining the study of units in classical descent with
local restrictions on the solutions coming from the multi-Frey approach in~\cite{BCDF}. However, we shall shortly see that we are only able to derive the conclusion $4 \mid a+b$ whilst
the 13-adic conclusion of Theorem~\ref{thm:discard-g} fails due to a unit obstruction\footnote{This unit obstruction went unnoticed in~\cite{BCDDF} due to a bug in the code; in particular, since there were no apparent obstructions due to the bug, the proof incorrectly finished.}.

Suppose $(a,b,c)$ is a non-trivial
primitive solution to~\eqref{fte:13-13-7}.

Let $\zeta$ be a primitive 13th root of unity.  We then have the factorization in $\Z[\zeta]$,
\begin{align*}
  a^{13} + b^{13} & = (a+b) \prod_{i=1}^{12} (a + \zeta^i b) = 3 c^7.
\end{align*}
The integers \(a + b\) and~\(\frac{a^{13} + b^{13}}{a + b}\) are coprime away from~\(13\) (see, e.g., \cite[Lemma~2.1]{DahmenSiksek}). Let~\(\ell\not\equiv 1\pmod{13}\) be a prime number dividing~\(a^{13} + b^{13}\). Since~\(a\) and~\(b\) are coprime, we have that~\(\ell\nmid ab\) and hence there exists an integer~\(b'\) such that~\(bb'\equiv -1 \pmod{\ell}\).  Therefore we have~\((ab')^{13}\equiv 1\pmod{\ell}\) and since~\(\ell\not\equiv 1\pmod{13}\), it follows that~\(\ell\) divides~\(a + b\). In particular, we have that~$\gcd (3,a + \zeta b) = 1$. Furthermore, by classical descent, we have that
\begin{equation}
\label{unit-equation}
  a + \zeta b = \begin{cases}
    \epsilon \beta^7  & \text{ if } 13 \nmid a+b \\
    \epsilon (1 - \zeta) \beta^7 & \text{ if } 13 \mid a+b,
   \end{cases}
\end{equation}
where $\epsilon$ is a unit of $\Z[\zeta]$ and $\beta \in \Z[\zeta]$.
We only need to consider $\epsilon$ up to 7th powers, which means that there are initially $16,807$ possible choices for $\epsilon$.

One can now reduce~\eqref{unit-equation} modulo a prime $\fq$ of $\Z[\zeta]$ above the rational 
prime~$q$. If $q$ is such that the order of the 
multiplicative group of the residue field at~$\fq$ is divisible 
by~$7$, the condition of $(a+\zeta b)\epsilon^{-1}$ or
$(a+\zeta b)\epsilon^{-1}(1-\zeta)^{-1}$
being a 7th power is non-trivial. Note that, in particular, the primes $q = 2, 11, 19, 23$ and $83$ satisfy this condition, a fact which will be used in the sequel.

We now consider the Frey curve $E = E_{a,b}$ over $\Q(\sqrt{13})$ defined in~\cite[\S 7.1]{BCDF} (see the file {\tt 13-curveE.m} in~\cite{programs} for an implementation in {\tt Magma}). From~\cite[Proposition~9]{BCDF} there is a mod 7 congruence
between $E$ and one of $E_{1,0}$, $E_{1,1}$, $E_{1,-1}$ or a certain Hilbert newform~$g$ over~\(\Q(\sqrt{13})\) of parallel weight~\(2\), trivial character, with field of coefficients $\Q_g = \Q(\sqrt{2})$. From the second and third paragraphs in the proof of \cite[Theorem~2]{BCDF} and \cite[Remark~7.4]{BCDF}
we conclude that
\begin{equation}\label{E:isos}
\rhobar_{E,7} \cong \rhobar_{E_{1,-1},7} \quad \text{ or } \quad
\rhobar_{E,7} \cong \rhobar_{g,\fp_7},
\end{equation}
where $\fp_7 \mid 7$ in $\Q_g$ is the prime in $\Q_g = \Q(\sqrt{2})$ such that $\sqrt{2} + 3 \in \fp_7$.

We can obtain $q$-adic information from the previous isomorphism; indeed, the isomorphism imposes constraints on the solutions $(a,b)$ modulo $q$ (in both cases of good or multiplicative reduction of $E_{a,b}$ at $q$) and hence on the unit $\epsilon$. We cannot obtain information from $q=2$ in the same way,
because $2$ divides the level of~$g$. Nevertheless, since
the multiplicative group of the residual field of $K$ at~$\fq=(2)$ has order a multiple of 7, this together with
$4 \nmid a + b$  also imposes restrictions on~$\epsilon$.

\begin{remark}
Observe that \cite[Proposition~6.1]{BCDDF} says that
$\rhobar_{E_{1,-1},7} \cong \rhobar_{g,\fp_7}$ hence
the information arising from the isomorphisms in~\eqref{E:isos} is the same. Our objective in this work is of course not to use this fact in the proof of Theorem~\ref{thm:discard-g}, as doing so is the content of one of the alternative proofs in {\it loc. cit.}.
\end{remark}
We now assume, for a contradiction, that $4 \nmid a+b$.
Note that $4 \nmid a+b$
is equivalent to $2 \nmid a + b$ given the shape of equation~\eqref{fte:13-13-7}. We sieve the set of possible units in the two cases of equation~\eqref{unit-equation},
assuming $2 \nmid a+b$. More precisely,
in both the cases $13 \nmid a+b$ and $13 \mid a+b$, using the primes
$q = 2, 11, 19, 23$ the set of units which passes all local conditions is empty (here the local conditions include the constraints on $(a,b)$ modulo~$q$ arising from the modular method as discussed above). See the file~{\tt ModularSieve.m} in~\cite{programs} for details. Thus~$4 \mid a+b$, as desired.

\begin{remark} \label{rem:F}
At this point we could continue the proof of Theorem~\ref{thm:discard-g} with a standard application of the modular method using the Frey curve $F_{a,b}$ introduced in~\cite[\S 7.2]{BCDF}. Indeed, assuming $4 \mid a+b$ and $13 \nmid a+b$,
from lemmas~9~and~11 in {\it loc. cit.}, we have $\rhobar_{F_{a,b},7} \simeq \rhobar_{f,\fp_7}$ where $f$
is a Hilbert newform  of parallel weight 2, trivial character and level $2 \cdot 3 \cdot \fq_{13}^2$ defined over the cubic real subfield~$K$ of the \(13\)th cyclotomic field, where $\fq_{13}$ is the unique prime in~$K$ above~13. The cuspidal and new subspaces are of dimensions 3823 and 2353, respectively. Computing the newforms in this space using {\tt Magma} takes about 90 hours, and yields 73 newforms including several with coefficient field of degree $d \in \{102, 108, 117, 153 \}$. In fields of such high degrees it is not computationally practical to factor ideals, so the refined elimination technique from~\cite[\S7.3]{BCDF} is not feasible. Therefore, we can only perform standard elimination by testing non-divisibility of norms by 7, but this fails to eliminate many newforms with high degree coefficient fields.
 \end{remark}

We can now assume $4 \mid a + b$ and $13 \nmid a + b$ aiming for another contradiction.
Using a similar sieve (see again the file~{\tt ModularSieve.m} in~\cite{programs}), now with the primes $q = 2, 11, 19, 23$ and~$83$ we conclude there is only one unit  surviving the sieve. More precisely, it is the unit~$\epsilon_0$ satisfying the relation
\begin{equation}\label{E:epsilon0}
\epsilon_0:=\frac{13^4}{(1-\zeta)^{48}} \qquad
13^4-\zeta 13^4= \epsilon_0 (1-\zeta)^{49}.
\end{equation}

We note that $(a,b) = (13^4,-13^4)$ does not satisfy $13 \nmid a+b$ but unfortunately when working modulo primes $q \neq 13$ this condition becomes invisible.  \footnote{This is precisely the error with the proof in \cite[\S 7]{BCDDF}. Due to a bug in the code performing the sieve, the unit $\epsilon_0$ was also eliminated and we did not notice the relation~\eqref{E:epsilon0}.}

\begin{remark}
This method of sieving units with modular information also appears in \cite{DahmenSiksek}, in the context of reducing the number of hyperelliptic curves to be considered for Chabauty. However, only the primes $q$ not dividing the level are considered there. Here we also sieve at $q=2$ which divides the level.
It should also be noted that the unit $\epsilon = 1$ does not pass the local conditions from the modular method (unlike the situation in \cite{DahmenSiksek}) because the equation~\eqref{fte:13-13-7} does not have the trivial solutions $\pm (1,0,1), \pm (0,1,1)$ due to the coefficient $3$.
\end{remark}

\section{Forcing level raising primes}
\label{sec:levelraising}

Let $(a,b,c)$ be a primitive solution to~\eqref{fte:13-13-7} satisfying $4 \mid a+b$ and $13 \nmid a+b$.

From the previous section, we have that
\begin{equation}\label{E:epsilon0eq}
 a+\zeta b = \epsilon_0 \beta^7 \quad \text{ with } \quad  \epsilon_0 =\frac{13^4}{(1-\zeta)^{48}}, \quad \beta \in \Z[\zeta]
\end{equation}
and we also know that sieving \eqref{E:epsilon0eq} modulo any prime $q$ will not discard $\epsilon_0$. Nevertheless, we still use the modular sieve above to gather further local information; indeed, having fixed an auxiliary prime~$q$, we run over all the pairs $(a,b)$ mod~$q$ and keep only the pairs compatible with~\eqref{E:isos} and~\eqref{E:epsilon0eq}. By doing this for several small primes, we find that for
$$
q \in \fL := \{5, 17, 19, 23, 29, 37, 41, 43, 61, 83, 89\}
$$
all the pairs $(a,b)$ mod~$q$ that survive also satisfy~$a+b \equiv 0 \pmod{q}$.

For the primes~\(q = 5, 17, 23, 29, 43, 61\), this conclusion already follows from~\eqref{E:isos}, but for $q =19, 37, 41, 83, 89$ we need~\eqref{E:epsilon0eq} as well\footnote{To speed up the computations for the primes~\(q = 29\) and~\(43\) (which are both~\(\equiv 1 \pmod{7}\)), we can use the fact that, under the assumption~\(13 \nmid a + b\), there exist (coprime) integers~\(c_1\) and~\(c_2\) such that~\(a + b = 3 c_1^7\) and~\(\frac{a^{13} + b^{13}}{a + b} = c_2^7\).}. See the file~{\tt LevelRaisingSieve.m} in~\cite{programs}.

We now associate to $(a,b,c)$ the Frey curve $F_{a,b} / K$ introduced in~\cite[\S 7.2]{BCDF},
where~$K$ is the cubic real subfield of the \(13\)th cyclotomic field.
From the previous discussion we can assume $q \mid a+b$ for all~$q \in \fL$.

From Theorem~8 and Lemmas~9~and~11 in {\it loc. cit.}, we have that~$\rhobar_{F_{a,b},7}$ is irreducible and
\begin{equation} \label{E:isos2}
\rhobar_{F_{a,b},7} \simeq \rhobar_{f,\fp_7}
\end{equation}
where $f$
is a Hilbert newform  of parallel weight 2, trivial character and level $2 \cdot 3 \cdot \fq_{13}^2$, where~$\fq_{13}$ is the unique prime in $K$ above~13 and $\fp_7 \mid 7$ is a prime in the field of coefficients of~$f$.
As explained in Remark~\ref{rem:F} it is not computationally feasible to obtain a contradiction to~\eqref{E:isos2} using standard elimination techniques.
Instead, it follows from Lemma~8 in {\it loc. cit.} and $q \mid a+b$ for $q \in \fL$,  that $F_{a,b}$ has multiplicative reduction at the primes~$\fq$ in $K$ above all the primes in $\fL$. This means that
level lowering modulo 7 is occurring to the representation
$\rho_{F_{a,b},7}$ at all these primes and hence, the newform $f$ in~\eqref{E:isos2} must simultaneously satisfy the level raising condition at the same primes. More precisely, we have
\begin{equation}\label{E:levelRaising}
 a_\fq(f) \equiv \pm (\Norm(\fq) + 1) \pmod{\fp_7} \quad \text{ for all primes } \fq \mid q \; \text{ with } q \in \fL.
\end{equation}
In a nutshell, we have used the unit sieve with modular information
to force level raising primes into the obstructing newforms. It is now very unlikely that a newform $f$ will satisfy~\eqref{E:levelRaising}, unless it has a reducible mod~$\fp_7$ representation, but reducible representations cannot occur in the isomorphism \eqref{E:isos2} because $\rhobar_{F_{a,b},7}$ is irreducible, giving a contradiction. 

In the remaining sections below, we will show that such an~$f$ indeed has reducible mod~$\fp_7$ representation.

\begin{remark}
  For the computational arguments in the following sections, we eventually only use a subset of the known level raising primes in $\fL$, namely the primes above $5$, and $83$.
\end{remark}

\section{Eliminating newforms modulo~7}
\label{S:computation}

Recall that~$K$ is the cubic real subfield of the \(13\)th cyclotomic field. Let $\fN = 2\cdot 3 \cdot \fq_{13}^2$.
We consider the set of Hecke operators $T_\fq$ in $S_2^{\rm new}(\fN ; \F_7)$
given by
\[
\mathcal{T}_1 = \left\{ \; T_\fq \; :  \Norm(\fq) \le 31 \text{ or } \Norm(\fq) = 83 \right\}                                                                                                                                                                                                                                                                   \]
and let $\mathbb{T}_1$ be the Hecke algebra generated by~$\mathcal{T}_1$.
Let $S_1$ be the socle of $S_2^{\rm new}(\fN, \F_7)$ viewed as a $\T_1$-module.
Note that among the primes in~$\mathcal{T}_1$ only the three primes above 5 and the three primes above~\(83\) belong to the set~$\fL$ of level raising primes. 

By~\eqref{E:isos2}, the obstructing form $f$ has a mod~$\fp_7$ representation isomorphic to that of an elliptic curve, hence its corresponding mod~$\fp_7$ system of eigenvalues lives in $\F_7$.

We computed $S_1$ and extracted the~\(43\) systems of eigenvalues in~$\F_7$. We remark that, when running {\tt Magma} to perform this computation, it is likely that some eigensystems will appear with multiplicity $> 1$ using the lattice provided by {\tt Magma}. Instead, applying an algorithm being developed by the authors, we are able to find a lattice (stored in the file {\tt Matrix.m}) such that the calculation yields multiplicity~$1$ for all $\F_7$-systems of eigenvalues. Indeed, using this lattice, the computation of \(\F_7\)-eigensystems takes about \(4\) hours using the file {\tt Certificate.m} given in~\cite{programs}\footnote{We note that due to random choices done by {\tt Magma} in order for the lattice computation being correct, we need to specify the initial seed provided in the file {\tt Certificate.m}.}. For convenience of the reader, we have saved the \(\F_7\)-eigensystems in the file {\tt Eigensystems.out}.

Running the file {\tt EliminationMod7.m}, we find that there are exactly four systems of eigenvalues $\mathcal{E}_1$, $\mathcal{E}_2$, $\mathcal{E}^\chi_1$ and $\mathcal{E}^\chi_2$
satisfying the congruence~\eqref{E:levelRaising}  simultaneously at the primes $\fq \mid 5 \cdot 83$; in fact, more is true:
\[
 a_\fq(\mathcal{E}_i) \equiv \Norm(\fq) + 1 \pmod{7} \quad \text{ and } \quad a_\fq(\mathcal{E}^\chi_i) \equiv \chi|_{G_K}(\fq) (\Norm(\fq) + 1) \pmod{7},
\]
for all the primes~$\fq$ such that $T_\fq \in \mathcal{T}_1$, where $\chi$ is the quadratic character of~$G_\Q$ fixing~$\Q(\sqrt{13})$. The eigensystems $\mathcal{E}_1, \mathcal{E}_2$ and $\mathcal{E}^\chi_1, \mathcal{E}^\chi_2$ can be distinguished by their eigenvalues at primes dividing~$\fN$, as given in Table~\ref{table:eigenvalues}.
This calculation serves as evidence that $\mathcal{E}_i$ and $\mathcal{E}^\chi_i$ arise from the reducible representations $1 \oplus \chi_7|_{G_K}$ and $(\chi \oplus \chi \chi_7 )|_{G_K}$, respectively, where $\chi_7$ is the mod~7 cyclotomic character; in fact, this is the content of the following theorem which we will prove in the next section.
\begin{theorem}\label{T:reducibleSystems}
For all primes $\fq$ in $K$ not dividing $2\cdot 3 \cdot \fq_{13}$, we have
\[
 a_\fq(\mathcal{E}_i) \equiv \Norm(\fq) + 1 \pmod{\fp_7} \quad \text{ and } \quad a_\fq(\mathcal{E}^\chi_i) \equiv \chi(\fq) (\Norm(\fq) + 1) \pmod{\fp_7}.
\]
\end{theorem}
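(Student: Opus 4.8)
The plan is to show that the two (families of) eigensystems $\mathcal{E}_i$ and $\mathcal{E}_i^\chi$ computed modulo $7$ are \emph{Eisenstein}, i.e. they coincide with the systems of Hecke eigenvalues attached to the reducible residual representations $1 \oplus \chi_7|_{G_K}$ and $(\chi \oplus \chi\chi_7)|_{G_K}$ respectively. Since a reducible (semisimple) two-dimensional representation of $G_K$ with the prescribed ramification and determinant $\chi_7$ (resp.\ $\chi_7$ twisted by the quadratic character $\chi$) is determined up to semisimplification by its trace function, and since the Hecke eigenvalue $a_\fq$ is the trace of Frobenius at $\fq$, it suffices to prove the displayed congruences on \emph{all} primes $\fq \nmid 2\cdot 3 \cdot \fq_{13}$, upgrading the finitely many congruences already verified in Section~\ref{S:computation} to a statement about the whole eigensystem.

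The key mechanism is an Eisenstein-ideal / Sturm-bound argument internal to the Hecke algebra $\T_1$ acting on $S_2^{\mathrm{new}}(\fN;\F_7)$. First I would introduce the candidate Eisenstein eigensystems $\Theta$ and $\Theta^\chi$ defined on all of the relevant primes by $a_\fq(\Theta) = \Norm(\fq)+1$ and $a_\fq(\Theta^\chi) = \chi(\fq)(\Norm(\fq)+1)$; these are genuine $\F_7$-valued systems of Hecke eigenvalues (they come from the mod~$7$ reduction of the appropriate Eisenstein series / reducible Galois representations, whose existence with level dividing $\fN$ one checks from the conductor computation, noting $\chi$ is ramified only at $13$ and the relevant primes above $13$ already divide $\fN$). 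Then, by the theory of the socle $S_1$ as a $\T_1$-module together with a Sturm-type bound for Hilbert modular forms over $K$ of parallel weight~$2$ and level $\fN$ modulo~$7$, two systems of eigenvalues appearing in $S_1$ that agree on all $T_\fq \in \mathcal{T}_1$ must be equal: the set $\mathcal{T}_1$ (primes of norm $\le 31$ together with those above $83$) is chosen precisely to exceed this bound, which is the content of the multiplicity-one lattice computation with the file {\tt Matrix.m}. Consequently, matching $\mathcal{E}_i$ (resp.\ $\mathcal{E}_i^\chi$) against $\Theta$ (resp.\ $\Theta^\chi$) on $\mathcal{T}_1$ — which was already verified, including the distinguishing eigenvalues at primes dividing $\fN$ recorded in Table~\ref{table:eigenvalues} — forces $\mathcal{E}_i = \Theta$ and $\mathcal{E}_i^\chi = \Theta^\chi$ as eigensystems, giving the congruences at \emph{all} $\fq \nmid 2 \cdot 3 \cdot \fq_{13}$.

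Concretely the steps are: (1) record the Sturm bound $B$ for parallel weight~$2$ Hilbert cusp forms over $K$ of level $\fN$ mod~$7$ and verify that $\mathcal{T}_1$ contains a set of primes whose norms generate enough Hecke data to separate eigensystems in the \emph{socle} $S_1$ (this uses that in the socle, multiplicity-one holds for the lattice provided, so an eigensystem is pinned down by its values on $\mathcal{T}_1$ once $\#\mathcal{T}_1$ or the norms involved pass $B$); (2) exhibit $\Theta$ and $\Theta^\chi$ as eigensystems occurring in $S_2^{\mathrm{new}}(\fN;\F_7)$ (equivalently, check the reducible representations $1 \oplus \chi_7|_{G_K}$ and $(\chi\oplus\chi\chi_7)|_{G_K}$ are modular of the stated level, e.g.\ by Eisenstein series on $\mathrm{GL}_2/K$ reduced mod~$7$, or by a direct check that they satisfy all local conditions at $2,3,\fq_{13}$); (3) conclude by the separation statement in (1) that the already-checked finite agreement forces global agreement. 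I expect the main obstacle to be step~(1): carefully justifying that agreement on the finite set $\mathcal{T}_1$ propagates to the full eigensystem. Over $\Q$ this is a classical Sturm bound, but over the cubic field $K$ one must either invoke an explicit Hilbert-modular Sturm bound (and check $\mathcal{T}_1$ is large enough relative to it) or argue more structurally that the Eisenstein eigensystems are the unique ones in the socle satisfying the level-raising congruence at $5$ and $83$ simultaneously with the Table~\ref{table:eigenvalues} data — the latter being exactly what the computation in Section~\ref{S:computation} establishes, so that the theorem's content is to certify that this computational coincidence is not accidental but forced by the reducible representations. A secondary technical point is tracking the quadratic twist by $\chi = \left(\tfrac{13}{\cdot}\right)$ carefully, since $\chi|_{G_K}$ need not be trivial and its restriction governs the precise form of $\mathcal{E}_i^\chi$.
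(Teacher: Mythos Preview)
Your overall architecture --- produce a cusp form whose mod~$7$ eigensystem is the reducible one, then use multiplicity one in the socle to identify it with $\mathcal{E}_i$ (resp.\ $\mathcal{E}_i^\chi$) --- is in fact the paper's architecture. But the weight of the argument lies entirely in your step~(2), and the methods you propose there do not work.

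The set $\mathcal{T}_1$ is \emph{not} chosen to exceed a Sturm bound; no such bound is invoked or likely available for primes of norm $\le 31$ in a new space of dimension $2353$ over a cubic field. The multiplicity-one statement is a computational fact about a specific lattice: each $\mathcal{T}_1$-eigensystem occurring in the socle occurs once. This is enough to conclude \emph{provided} you already know a newform at level $\fN$ realises $\Theta$ (resp.\ $\Theta^\chi$) modulo~$7$; it is not a statement that any $\F_7$-valued function on $\mathcal{T}_1$ extends uniquely. Your suggestions for step~(2) --- reducing an Eisenstein series mod~$7$, or ``checking local conditions'' --- do not produce such a newform: Eisenstein series are not cuspidal, and a conductor calculation only bounds the level of a putative congruent cusp form, it does not manufacture one. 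Worse, for $\Theta = 1 \oplus \chi_7|_{G_K}$ the natural level is $\fN_0 = 2\cdot 3\cdot\fq_{13}$, not $\fN$, so even granting an Eisenstein congruence you would land in the old subspace.

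The paper supplies step~(2) as follows. At level $\fN_0$ one first proves (Propositions~\ref{P:reducibleSmallLevel} and~\ref{P:reducibleSmallLevel2}, via an extension to $G_\Q$, Serre's conjecture, and Martin's Eisenstein-congruence theorem over~$\Q$) that there exist newforms $f$ with $\rhobar_{f,\fp_0}^{\rm ss}\simeq 1\oplus\chi_7|_{G_K}$. Twisting $f$ by $\chi$ gives a form \emph{new} at level~$\fN$ realising $\Theta^\chi$, and matching against Table~\ref{table:eigenvalues} plus multiplicity one finishes $\mathcal{E}_i^\chi$. For $\mathcal{E}_i$ one cannot argue symmetrically (the reducible form sits at level $\fN_0$, not new at $\fN$); instead one lifts $\mathcal{E}_i$ to a newform $F_i$ via Deligne--Serre, twists $F_i$ by $\chi|_{G_K}$, rules out the smaller possible levels $6\calO_K$ and $\fN_0$ for the twist, and concludes the twist is $\mathcal{E}_i^\chi$, whence $\mathcal{E}_i = \mathcal{E}_i^\chi\otimes\chi|_{G_K}$ is reducible. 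This twisting/level-analysis step is the missing idea in your outline.
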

Therefore $\mathcal{E}_i$ and~$\mathcal{E}^\chi_i$ do not arise from any newforms~$f$ satisfying~\eqref{E:isos2} as such a form has an irreducible mod~$\fp_7$ representation. This finishes the elimination of all newforms, completing the proof of Theorem~\ref{thm:discard-g} and hence Theorem~\ref{thm:main}.
\begin{table}[h]
    \centering
    \begin{tabular}{|c||c|c|c|c|}
     \hline
        $\fq \mid \fN$ & $\mathcal{E}_1$ & $\mathcal{E}_2$ & $\mathcal{E}^\chi_1$ & $\mathcal{E}^\chi_2$
        \\ \hline
        $2\calO_K$ & 1 & 1 & -1 & -1 \\ \hline
        $3\calO_K$ & 1 & -1 & 1 & -1 \\ \hline
        $\fq_{13}$  & 0 & 0 & 0 & 0 \\ \hline
    \end{tabular}
\caption{Eigenvalues at Steinberg primes}
    \label{table:eigenvalues}
\end{table}

\begin{remark}
The computation in the file {\tt Certificate.m} applies the socle method of \cite[Section~6]{BCDDF} which is a computationally efficient method used to prove two newforms are congruent modulo a concrete prime $\fp$. In this method, one needs to specify a Hecke invariant lattice in the space of newforms. When the corresponding eigensystem for a finite set of Hecke operators appears with multiplicity $1$, with respect to the chosen lattice, we succeed in proving the two newforms are congruent modulo $\fp$.
\end{remark}

\section{Newforms with reducible mod \texorpdfstring{$\fp_7$}{} representations}
\label{sec:reducible}

For the proof of Theorem~\ref{T:reducibleSystems} below we need various auxiliary results.

Let $\fN_0 = 2\cdot 3 \cdot \fq_{13}$ where again~\(\fq_{13}\) is the unique prime ideal above~\(13\) in the totally real cubic subfield~\(K\) of the \(13\)th cyclotomic field. In \cite[Proposition 3.2]{BCDDF} we proved that there are 5 pairs $(f,\fp_0)$ where $f$ is a Hilbert newform in $S_2(\fN_0)$ and $\fp_0$ is a prime in its coefficient field~$\Q_f$ with residue field $\F_{\fp_0} = \F_7$ such that $\rhobar_{f,\fp_0}$ is reducible under the assumption that $\rhobar_{f,\fp_0}$ is ramified at all primes $\fq \mid \fN_0$. (See the file~{\tt PubMatSection3.m} in~\cite{programs} for the computations supporting the assertions in Section~3 of~\cite{BCDDF}.) The following result gets rid of this ramification assumption.

\begin{proposition} \label{P:reducibleSmallLevel}
The representation $\rhobar_{f,\fp_0}$ is reducible for each of the five pairs above.
\end{proposition}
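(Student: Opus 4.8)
The plan is to remove the ramification hypothesis by a level-lowering argument, run by contradiction. Fix one of the five pairs $(f,\fp_0)$ and assume $\rhobar_{f,\fp_0}$ is irreducible. Since $\fN_0=2\cdot 3\cdot\fq_{13}$ is squarefree and $f$ has trivial character, $f$ is Steinberg at each of the primes $2\calO_K$, $3\calO_K$ and $\fq_{13}$; since moreover $f$ has parallel weight~$2$ and level prime to~$7$, the representation $\rhobar_{f,\fp_0}$ is flat at the prime above~$7$, so no lowering is possible there. Whenever $\rhobar_{f,\fp_0}$ is \emph{unramified} at one of the three Steinberg primes~$\fq$, the assumed irreducibility lets me invoke Ribet-type lowering the level for Hilbert modular forms over~$K$ (Fujiwara, Jarvis, Rajaei) and replace $f$ by a Hilbert newform of level $\fN_0/\fq$ with isomorphic mod-$\fp_0$ representation. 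Iterating, $\rhobar_{f,\fp_0}$ arises from a Hilbert newform of some minimal level $\mathfrak{M}\mid\fN_0$ at which it is ramified at every prime.

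If $\mathfrak{M}=\fN_0$, then $\rhobar_{f,\fp_0}$ is ramified at all primes dividing $\fN_0$, and \cite[Proposition~3.2]{BCDDF} forces it to be reducible, contradicting the assumption. Otherwise $\mathfrak{M}$ is one of the seven proper divisors $\calO_K$, $2\calO_K$, $3\calO_K$, $\fq_{13}$, $6\calO_K$, $2\calO_K\fq_{13}$, $3\calO_K\fq_{13}$, and $\rhobar_{f,\fp_0}$ would arise from a Hilbert cusp form of parallel weight~$2$ of that level. For each such $\mathfrak{M}$ I would then check that the space $S_2(\mathfrak{M})$ contains no eigenform~$g$ admitting a prime $\fp\mid 7$ of its coefficient field with $\rhobar_{g,\fp}\cong\rhobar_{f,\fp_0}$; since all these levels are small — for instance $S_2(\calO_K)=0$ — this is a short enumeration with {\tt Magma} using the already-known system of eigenvalues of~$f$. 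This again yields a contradiction, so $\rhobar_{f,\fp_0}$ is reducible. One may furthermore witness the reducibility explicitly, by matching $a_\fq(f)\bmod\fp_0$ with $\psi_1(\fq)+\psi_2(\fq)$ up to a Sturm-type bound for a pair of finite-order characters $(\psi_1,\psi_2)$ of $G_K$ with $\psi_1\psi_2=\chi_7|_{G_K}$, the candidates being those already produced in \cite{BCDDF}.

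The main obstacle will be the computational bookkeeping of the second step: one must pin down the spaces of Hilbert cusp forms at every proper divisor of $\fN_0$ and verify at each of them that none of their irreducible mod-$7$ eigensystems is congruent to that of any of the five forms~$f$ — equivalently, that every chain of level lowerings starting from one of the five pairs terminates either in the level-$\fN_0$ situation governed by \cite[Proposition~3.2]{BCDDF} or at an Eisenstein series. Some care is also needed in applying the lowering criterion at $\fq_{13}$, which is ramified in $K/\Q$, and in handling the usual exceptional (e.g.\ dihedral) cases; however all the levels occurring are small, and these verifications are carried out with {\tt Magma} (see \cite{programs}).
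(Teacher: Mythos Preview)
Your overall architecture (assume irreducibility, level-lower over $K$ to a divisor $\mathfrak{M}\mid\fN_0$, then rule out each $\mathfrak{M}$) coincides with the paper's, and the easy cases are handled identically: $S_2(\calO_K)=S_2(\fq_{13})=0$, while at $2\calO_K$ and $3\calO_K$ the unique matching orbit corresponds to an elliptic curve over $K$ with a $K$-rational $7$-torsion point, so its mod~$7$ representation is reducible. The divergence, and the gap in your proposal, is at the three remaining levels $6\calO_K$, $2\fq_{13}$, $3\fq_{13}$.

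At those levels you assert that a ``short enumeration with {\tt Magma}'' will show no Hilbert newform $g$ has $\rhobar_{g,\fp}\cong\rhobar_{f,\fp_0}$. But computing finitely many Hecke eigenvalues can only \emph{distinguish} eigensystems, never prove a congruence; if (as actually happens) a newform $g$ at one of these levels satisfies $a_\fq(g)\equiv 1+\Norm(\fq)\pmod{\fp}$ for every $\fq$ you test, your enumeration does not terminate. You then need to \emph{prove} that $\rhobar_{g,\fp}$ is reducible, and you give no mechanism for this beyond the informal phrase ``terminates at an Eisenstein series''. There is no Sturm-type bound available here that certifies reducibility from finitely many congruences, and the forms in question need not be attached to elliptic curves with visible $7$-torsion.

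The paper supplies exactly the missing ingredient: for $\mathfrak{M}\in\{6\calO_K,2\fq_{13},3\fq_{13}\}$ it does \emph{not} try to work over~$K$. Instead it uses, from the proof of \cite[Proposition~3.2]{BCDDF}, that $\rhobar_{f,\fp_0}$ extends to an irreducible $\rhobar:G_\Q\to\GL_2(\overline{\F}_7)$; after an appropriate twist one computes $k(\rhobar)=2$, trivial nebentypus, and Serre level $N\in\{6,26,39\}$. Khare--Wintenberger then produces a classical newform $h\in S_2(N)^{\rm new}$ with $\rhobar\simeq\rhobar_{h,\fp}$. For $N=6$ the space is zero; for $N=26,39$ the candidate $h$ is pinned down by congruences at split primes, and Martin's Eisenstein-congruence theorem \cite{martin} proves that $\rhobar_{h,\fp}$ is reducible, giving the contradiction. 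This descent to $\Q$ and the appeal to Serre's conjecture and to \cite{martin} are the key ideas you are missing; without them, your step ``check that no $g$ at level $\mathfrak{M}$ matches'' is not a finite computation.
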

\begin{proof} For the computations of this proof, see the file~{\tt RedMod7.m} in~\cite{programs}. Let $(f,\fp_0)$ be any of the pairs in \cite[Proposition 3.2]{BCDDF}. For all primes $\fq \nmid \fN_0$ with $\Norm(\fq) < 100$
we check that $a_\fq(f) \equiv 1 + \Norm(\fq) \pmod{\fp_0}$.

For a contradiction, we assume that $\rhobar_{f,\fp_0}$ is irreducible. We can also assume that $\rhobar_{f,\fp_0}$ is unramified at least at one prime $\fq \mid \fN_0$ because otherwise the result is \cite[Proposition 3.2]{BCDDF}.

The possibilities for the Serre level $N(\rhobar_{f,\fp_0})$ are~$n\calO_K$ for $n = 1,2,3,6$ and $\fq_{13}$, $2\fq_{13}$, $3\fq_{13}$. From level lowering for Hilbert modular forms, there is a Hilbert newform $g \in S_2(N(\rhobar_{f,\fp_0}))$
such that $\rhobar_{f,\fp_0} \simeq \rhobar_{g,\fp_7}$ for some prime $\fp_7 \mid 7$ in $\Q_g$.

A quick calculation with {\tt Magma} shows that for
$N(\rhobar_{f,\fp_0}) \in \{ 1\calO_K, \fq_{13} \}$ there are no newforms. At level~\(2\calO_K\), there is a single Galois orbit of newforms labelled {\tt 3.3.169.1-8.1-a} in the LMFDB \cite{lmfdb} and it corresponds to the isogeny class of elliptic curves over~\(K\) with conductor~\(2 \calO_K\). This isogeny class in turn contains an elliptic curve labelled {\tt 8.1-a2} whose Mordell--Weil group is isomorphic to~\(\Z / 7 \Z\). In particular, the mod~\(7\) representation~\(\rhobar_{g,\fp_7}\) is reducible, hence contradicting our assumption.

We check that there is exactly one pair $(g_0,\fp_7')$ where $g_0$ is a newform of level $N(\rhobar_{f,\fp_0}) = 3\calO_K$ and $\fp_7' \mid 7$ in~$\Q_{g_0}$ such that, for all primes $\fq \nmid \fN$ of $\Norm(\fq) < 30$, we have~$a_\fq(g_0) \equiv 1 + \Norm(\fq) \pmod{\fp_7'}$. The newform~\(g_0\) is the form labelled {\tt 3.3.169.1-27.1-a} in LMFDB \cite{lmfdb} and it corresponds to the isogeny class {\tt 27.1-a} which again contains an elliptic curve with a \(K\)-rational \(7\)-torsion point. We conclude as before.

We conclude that $N(\rhobar_{f,\fp_0}) = 6\calO_K, 2\fq_{13}$ or~$3 \fq_{13}$. From the proof of Proposition~3.2 in \cite{BCDDF} there is an irreducible representation $\rhobar : G_\Q \to \GL_2(\overline{\F}_7)$ that extends $\rhobar_{f,\fp_0}$, that is $\rhobar|_{G_K} = \rhobar_{f,\fp_0}$ (this does not require the additional ramification hypothesis in {\it loc. cit.}).

We will now determine the Serre level $N = N(\rhobar)$, the nebentypus $\epsilon(\rhobar)$ and
Serre weight $k(\rhobar)$.

We have $k(\rhobar) = 2$ because $\rhobar|_{G_K} = \rhobar_{f,\fp_0}$ is of parallel weight~$2$ and 7 is unramified in~$K$.

Recall that $2$ and $3$ are inert in~$K$, and that $13$ is the only ramified prime in $K$.

Let~\(\fq\) be a prime ideal in~\(K\) above a rational prime~\(\ell\) dividing~\(N(\rhobar_{f,\fp_0})\).

If~\(\ell = 2, 3\), then~$\rhobar_{f,\fp_0}$ is a Steinberg representation at~$\fq$, thus
$\rhobar$ is also Steinberg at $\ell$, and hence~$\vv_\ell(N) = 1$. If~$\ell = 13$, the representation $\rhobar$ is either Steinberg or a twist of Steinberg by a character $\chi$ of~$G_\Q$
that becomes trivial over $K$. In the latter case, $\chi$ is of conductor~$13^1$ and the conductor of $\rhobar$ at~$13$ is $13^2$.
So twisting $\rhobar$ by $\chi$, if necessary, we can assume also that $\vv_{13}(N) = 1$ since both $\rhobar$ and
$\rhobar \otimes \chi$ restrict to $\rhobar_{f,\fp_0}$.
Thus $N$ is the squarefree product of the rational primes dividing the norm of~\(N(\rhobar_{f,\fp_0})\), that is, $N \in \{6,26, 39\}$.

Since $\rhobar$ is Steinberg at every prime~\(\ell \mid N\), it follows that
the nebentypus $\epsilon(\rhobar)$ is locally trivial at~$\ell$;
thus $\epsilon(\rhobar) = 1$ as there are no other ramified primes for $\rhobar$.

From Khare--Wintenberger (Serre's conjecture)~\cite{kw09}, there is a newform
$h \in S_2(N)^{\rm new}$
and a prime $\fp \mid 7$ in $\Q_h$ such that $\rhobar \simeq \rhobar_{h, \fp}$. For~\(N = 6\), the space~\(S_2(6)^{\rm new}\) is trivial and we get a contradiction.

Therefore, we have that~\(N = \ell \cdot 13\) with~\(\ell = 2, 3\). Note that for a rational prime $q$ that splits in $K$ and a prime $\fq \mid q$ in $K$ we have
\[
 \Tr(\rhobar_{f,\fp_0}(\Frob_\fq)) = \Tr(\rhobar(\Frob_\fq)) = \Tr(\rhobar_{h,\fp}(\Frob_\fq)) = \Tr(\rhobar_{h,\fp}(\Frob_q))  \pmod{\fp}.
\]
For each of $\ell=2,3$ we check this congruence at a few splitting primes $q$ and we conclude that $(h,\fp)$ is unique and satisfies $a_q(h) \equiv 1 + q \pmod{\fp}$. We now apply~\cite[Theorem~A]{martin} (which corresponds to the special case of classical modular forms in~Theorem~2.1 of~\emph{loc. cit.}) with~\(N_1 = \ell\) and~\(N_2 = 13\). We find that there is a cuspidal eigenform in~\(S_2(\ell \cdot 13)\) with reducible associated mod~\(7\) representation isomorphic to~\(1 \oplus \chi_7\) (up to semisimplification). Moreover, this form is necessarily new as~\(S_2(m)^{\rm new} = \{0\}\) for~\(m\) a proper divisor of~\(\ell \cdot 13\).
It follows that $h$ must be such form, hence $\rhobar_{h, \fp}$ is reducible, a contradiction with $\rhobar \simeq \rhobar_{h, \fp}$.

We have obtained a contradiction with each possible $N(\rhobar_{f,\fp_0})$ thus~$\rhobar_{f,\fp_0}$ is reducible.
\end{proof}

\begin{proposition}
\label{P:reducibleSmallLevel2}
Let $f \in S_2(\fN_0)^{\rm new}$ be a Hilbert newform such that $\rhobar_{f,\fp_7}$ is reducible for some prime $\fp_7 \mid 7$ in its coefficient field.
Then $\rhobar_{f,\fp_7}^{\rm ss} \simeq 1 \oplus \chi_7|_{G_K}$.
\end{proposition}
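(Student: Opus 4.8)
The plan is to identify the two characters occurring in $\rhobar_{f,\fp_7}^{\rm ss}$ by controlling their ramification, using that the level $\fN_0$ is squarefree, that $7$ is unramified in $K$ and $f$ has parallel weight~$2$, and that the class group of $K$ is trivial. Since $f$ has parallel weight~$2$ and trivial character and $7$ is unramified in~$K$, we have $\det \rhobar_{f,\fp_7} = \chi_7|_{G_K}$. As $\rhobar_{f,\fp_7}$ is reducible we may write $\rhobar_{f,\fp_7}^{\rm ss} \simeq \theta \oplus \psi$ with $\psi = \theta^{-1}\chi_7|_{G_K}$ for some character $\theta\colon G_K \to \overline{\F}_7^{\times}$; only the unordered pair $\{\theta,\psi\}$ matters, so we are free to interchange $\theta$ and $\psi$. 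It then suffices to prove that $\theta$ is unramified at every finite prime of~$K$: indeed $\theta$ would then factor through the ideal class group of~$K$, which is trivial because $\Q(\zeta_{13})^{+}$ has class number~$1$, and hence $\theta = 1$ and $\rhobar_{f,\fp_7}^{\rm ss} \simeq 1 \oplus \chi_7|_{G_K}$.

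First I would treat the primes $\fq \in \{2\calO_K,\,3\calO_K,\,\fq_{13}\}$ dividing~$\fN_0$. Because $\fN_0$ is squarefree and $\fq \nmid 7$, the conductor exponent of $\rhobar_{f,\fp_7}$ at~$\fq$ is at most~$1$; in particular $\rhobar_{f,\fp_7}$ is tamely ramified at~$\fq$, so $\dim \rhobar_{f,\fp_7}^{I_\fq} \ge 1$. Since $\dim W^{I_\fq} \le \dim (W^{\rm ss})^{I_\fq}$ for any $2$-dimensional representation~$W$, this gives $\dim(\theta \oplus \psi)^{I_\fq} \ge 1$, i.e.\ $\theta|_{I_\fq} = 1$ or $\psi|_{I_\fq} = 1$; and as $\theta\psi = \chi_7|_{G_K}$ is unramified at~$\fq$, the relation $\psi|_{I_\fq} = \theta^{-1}|_{I_\fq}$ forces both $\theta$ and $\psi$ to be unramified at~$\fq$. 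Thus $\theta$ is unramified at $2\calO_K$, $3\calO_K$ and $\fq_{13}$.

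It remains to treat $\fp_7$ itself. Because $7$ is unramified in~$K$ and $f$ has parallel weight~$2$, the restriction $\rhobar_{f,\fp_7}|_{G_{\fp_7}}$ lies in the Fontaine--Laffaille range; being reducible it must be ordinary at~$\fp_7$, so that the semisimplification of $\rhobar_{f,\fp_7}|_{I_{\fp_7}}$ is $1 \oplus \chi_7|_{I_{\fp_7}}$. Hence $\{\theta|_{I_{\fp_7}},\, \psi|_{I_{\fp_7}}\} = \{1,\ \chi_7|_{I_{\fp_7}}\}$, and interchanging $\theta$ and $\psi$ if needed we may assume $\theta|_{I_{\fp_7}} = 1$, i.e.\ $\theta$ is also unramified at~$\fp_7$. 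Combined with the previous paragraph, $\theta$ is unramified at every finite prime of~$K$, and the class-number argument concludes.

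The step I expect to demand the most care is this local analysis at~$\fp_7$: one must invoke Fontaine--Laffaille theory over the unramified cubic extension $K_{\fp_7}/\Q_7$ to conclude that a globally (hence locally) reducible $\rhobar_{f,\fp_7}$ is necessarily ordinary at~$\fp_7$ with the expected inertial type, which involves some bookkeeping over the three embeddings $\F_{\fp_7} \hookrightarrow \overline{\F}_7$; everything else is formal once the conductor bound coming from the squarefree level~$\fN_0$ is in place. As an alternative to this local input, one could instead combine Proposition~\ref{P:reducibleSmallLevel} with \cite[Proposition~3.2]{BCDDF} to reduce to a short explicit list of pairs $(f,\fp_7)$ and then verify $a_\fq(f) \equiv 1 + \Norm(\fq) \pmod{\fp_7}$ for enough primes~$\fq$, concluding via Brauer--Nesbitt and Chebotarev; the argument above has the advantage of avoiding any computation.
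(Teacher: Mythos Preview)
Your proof follows essentially the same route as the paper's: decompose $\rhobar_{f,\fp_7}^{\rm ss}$ as $\theta \oplus \theta'$ with $\theta\theta' = \chi_7|_{G_K}$, use squarefreeness of~$\fN_0$ to show both characters are unramified away from~$7$, analyse inertia at~$7$ to make one of them unramified there as well, and finish with a class-group argument. Two points deserve comment.

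First, at the prime above~$7$ the paper does not invoke Fontaine--Laffaille. It uses that $K$ has odd degree, so by Zhang the representation arises in an abelian variety; then Raynaud's classification of finite flat group schemes gives $\rhobar_{f,\fp_7}|_{I_7} \simeq \chi_7 \oplus 1$ or $\psi \oplus \psi^7$ with $\psi$ a fundamental character of level~$2$, and the second option is excluded by the concrete obstruction $7^2-1 \nmid 7^3-1$ (via local class field theory, since $7$ is inert in~$K$). Your assertion ``being reducible it must be ordinary'' is the right conclusion but is not itself a theorem; what actually has to be shown is precisely that the niveau~$2$ inertial type is incompatible with local reducibility over $K_{\fp_7}$, which the paper does via this divisibility. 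You correctly flag this as the delicate step.

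Second, there is a small slip in your final step: a character of~$G_K$ unramified at all \emph{finite} places factors through the \emph{narrow} class group, not the ordinary class group. You have no control over $\theta(c_v)$ at the real places~$v$, since $\theta(c_v)\theta'(c_v) = \chi_7(c_v) = -1$ forces exactly one of the two to be nontrivial there. The paper accordingly appeals to the fact that $K$ has narrow class number~$1$. For $K = \Q(\zeta_{13})^+$ the narrow class number is indeed~$1$, so your conclusion survives, but the justification should be adjusted.
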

\begin{proof}

For $f$ as in the statement, let~$\F_{\fp_7}$ denotes the residual field of $\Q_f$ at $\fp_7$. By the reducibility assumption, there are characters $\theta, \theta' : G_K \to \F_{\fp_7}^*$ such that the semisimplification of~$\rhobar_{f,\fp_7}$ satisfies
$ \rhobar_{f,\fp_7}^{ss} \simeq \theta \oplus \theta'$ with $\theta \theta' = \chi_7|_{G_K}$.
For all primes $\fq \nmid 7$, we have $\theta'|_{I_\fq} = \theta^{-1}|_{I_\fq}$ (since $\chi_7$ is unramified away from $7$). In particular, the conductor exponent of~$\theta$ and~$\theta'$ is the same at each such prime, and these characters are unramified at primes where the conductor of~\(\rhobar_{f,\fp_7}\) has valuation 0 or odd. It follows that $\theta$ and $\theta'$ are unramified away from 7.

Note that $7$ is inert in $K$.
Since $\fN_0$ is coprime to~7 and $f$ is of weight 2, the $\fp_7$-adic representation~$\rho_{f,\fp_7}$ is Barsotti--Tate and arises in an abelian variety since $K$ has odd degree (see~\cite{zha01}). Therefore, by~\cite[Corollaire 3.4.4]{Raynaud}, the restriction to an inertia subgroup~$I_7 \subset G_K$
of $\rhobar_{f,\fp_7}$ is isomorphic to either $\chi_7|_{I_7} \oplus 1$ or $\psi \oplus \psi^p$, where~$\psi$ is a fundamental character of level~$2$.
In the latter case, since the tame character $\psi$ has image (of inertia) of order $7^2-1$, by local class field theory, we must have $48 \mid \# (\calO_K/7\calO_K)^\times = 7^3-1$, a contradiction.

Thus exactly one of~\(\theta,\theta'\) is unramified at~7 (while the other restricts to~\(I_7\) as the cyclotomic character); after swapping $\theta$ and $\theta'$ if needed, we can assume that $\theta$ is unramified at all finite primes. As $K$ has narrow class number 1, we have $\theta = 1$ and $\theta' = \chi_7|_{G_K}$, as desired.
\end{proof}

\begin{corollary}\label{C:reducibleSmallLevel}
 Let $(f, \fp_0)$ be one of the five pairs above. Then $\rhobar_{f,\fp_0}^{\rm ss} \simeq 1 \oplus \chi_7|_{G_K}$ and
 $$ (a_2(f),a_3(f),a_{\fq_{13}}(f)) \in \{ (1,-1,1),(1,1,1),(1,1,-1),(1,-1,-1) \}.$$
Moreover, there is no other pair in $S_2(\fN_0)$ such  that $a_\fq(f) \equiv 1 + \Norm(\fq) \pmod{\fp_0}$ for all primes $\fq \nmid \fN\cdot 7$ of $\Norm(\fq) \le 100$.
\end{corollary}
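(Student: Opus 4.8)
The plan is to deduce the corollary from Propositions~\ref{P:reducibleSmallLevel} and~\ref{P:reducibleSmallLevel2}, together with a short local analysis at the primes dividing $\fN_0$ and a finite enumeration for the last assertion. Fix one of the five pairs $(f,\fp_0)$; by construction $f$ is a Hilbert newform in $S_2(\fN_0)$, hence new of level $\fN_0$. Proposition~\ref{P:reducibleSmallLevel} says $\rhobar_{f,\fp_0}$ is reducible, so Proposition~\ref{P:reducibleSmallLevel2} (applied with $\fp_7=\fp_0$) gives $\rhobar_{f,\fp_0}^{\rm ss}\simeq 1\oplus\chi_7|_{G_K}$. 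Comparing traces of Frobenius, for every prime $\fq\nmid\fN_0\cdot 7$ we get $a_\fq(f)\equiv\Tr\rhobar_{f,\fp_0}(\Frob_\fq)=1+\Norm(\fq)\pmod{\fp_0}$; in particular this holds whenever $\Norm(\fq)\le 100$, which both recovers and strengthens the congruences already checked in the proof of Proposition~\ref{P:reducibleSmallLevel}.

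Next I would determine $(a_2(f),a_3(f),a_{\fq_{13}}(f))$. Since $\fN_0=2\cdot 3\cdot\fq_{13}$ is squarefree and $f$ has parallel weight $2$ and trivial character, the local representation attached to $f$ at each of $2\calO_K$, $3\calO_K$, $\fq_{13}$ is an unramified twist of Steinberg, so each of these three eigenvalues lies in $\{1,-1\}$. To fix the signs, restrict $\rhobar_{f,\fp_0}$ to a decomposition group $D_\fq$ at one of these primes: local--global compatibility gives $\rhobar_{f,\fp_0}|_{D_\fq}^{\rm ss}\simeq\bar\lambda\oplus\bar\lambda\,\chi_7|_{D_\fq}$ with $\bar\lambda$ unramified and $\bar\lambda(\Frob_\fq)=a_\fq(f)$, whereas the global shape $\rhobar_{f,\fp_0}^{\rm ss}=1\oplus\chi_7|_{G_K}$ forces the Frobenius eigenvalues on $D_\fq$ to be $\{1,\Norm(\fq)\bmod 7\}$. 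Since $\Norm(2\calO_K)=2^3\equiv 1\pmod 7$ while $\Norm(3\calO_K)=3^3$ and $\Norm(\fq_{13})=13$ are both $\equiv -1\pmod 7$, matching the two descriptions forces $\bar\lambda(\Frob_{2\calO_K})=1$, i.e.\ $a_2(f)=1$, but leaves both signs possible at $3\calO_K$ and at $\fq_{13}$. This yields exactly the four tuples $(1,\pm 1,\pm 1)$ listed in the statement. Since $\fN_0$ has small norm, one could alternatively compute the five newforms explicitly and simply read off these eigenvalues.

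For the ``moreover'' clause I would enumerate all Hilbert newforms $f\in S_2(\fN_0)$ together with all primes $\fp\mid 7$ in the coefficient field $\Q_f$, and verify with {\tt Magma} that $a_\fq(f)\equiv 1+\Norm(\fq)\pmod{\fp}$ for all $\fq\nmid\fN\cdot 7$ with $\Norm(\fq)\le 100$ holds only for the five pairs $(f,\fp_0)$. This is a quick computation because $\fN_0$ has small norm, and it is precisely what the file {\tt RedMod7.m} referenced in the proof of Proposition~\ref{P:reducibleSmallLevel} certifies.

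Since Propositions~\ref{P:reducibleSmallLevel} and~\ref{P:reducibleSmallLevel2} already carry the conceptual weight, no single step is hard; the points needing care are the local bookkeeping at the ramified prime $\fq_{13}$ in the sign determination (and fixing once and for all the Frobenius normalization so that $a_\fq(f)=1$ rather than $-1$ at a genuine Steinberg prime), and ensuring that the enumeration behind the ``moreover'' clause is genuinely exhaustive over Galois orbits and over all primes above $7$ in the coefficient fields that arise.
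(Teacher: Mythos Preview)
Your proposal is correct and, for the first and last assertions, matches the paper exactly: the semisimplification comes straight from Propositions~\ref{P:reducibleSmallLevel} and~\ref{P:reducibleSmallLevel2}, and the ``moreover'' clause is a finite {\tt Magma} enumeration over newforms in $S_2(\fN_0)$ and primes above~$7$ in their coefficient fields.

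The one genuine difference is the middle claim. The paper simply reads off the triples $(a_2(f),a_3(f),a_{\fq_{13}}(f))$ from the explicit computation of the five pairs, whereas you supply a conceptual argument: at a Steinberg prime~$\fq$ the local semisimplification has Frobenius eigenvalues $\{a_\fq(f),\,a_\fq(f)\Norm(\fq)\}$ in~$\F_7$, which must coincide with $\{1,\Norm(\fq)\}$ coming from $1\oplus\chi_7|_{G_K}$; since $\Norm(2\calO_K)=8\equiv 1$ this pins down $a_2(f)=1$, while $\Norm(3\calO_K)\equiv\Norm(\fq_{13})\equiv -1\pmod 7$ leaves both signs available at $3$ and at~$\fq_{13}$. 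This is a clean way to explain \emph{why} the list has the shape $(1,\pm 1,\pm 1)$ rather than merely observing it, and it avoids any computation beyond what is already in the file supporting Proposition~\ref{P:reducibleSmallLevel}. The paper's purely computational route is of course shorter to state and has the side benefit of recording which of the four triples actually occur among the five pairs, information your argument does not by itself provide (nor does the corollary require).
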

\begin{proof}The first claim is a direct consequence of Propositions~\ref{P:reducibleSmallLevel} and~\ref{P:reducibleSmallLevel2}, and the rest follows from a {\tt Magma} calculation.
\end{proof}

\subsection{Proof of Theorem~\ref{T:reducibleSystems}}
Let $\chi$ be the quadratic character of~$\Q(\sqrt{13})$. Let $(f,\fp_0)$ be a pair as in Corollary~\ref{C:reducibleSmallLevel} and set $g = f \otimes \chi$. The form $g$ is new at level~$\fN =  2\cdot 3 \cdot \fq_{13}^2$.
It follows that $(g,\fp_0)$ has
system of eigenvalues matching $(\chi \oplus \chi \chi_7 )|_{G_K}$ away from the level and satisfying
$$ (a_2(g),a_3(g),a_{\fq_{13}}(g)) \in \{ (-1,-1,0),(-1,1,0) \}.$$
From Table~\ref{table:eigenvalues} we see this proves the theorem for the systems of eigenvalues $\mathcal{E}^\chi_i$.

By the Deligne--Serre lemma, there is a newform $F_i \in S^{\text{new}}_2(\fN)$ giving rise to the $\F_7$-system of eigenvalues~$\mathcal{E}_i$.
We have $v_{\fq_{13}}(\fN) = 2$ and the conductor exponent of~$\chi|_{G_K}$ is 1, therefore, the twisted
form $F_i \otimes \chi|_{G_K}$ must be new at level $6\calO_K$, $\fN_0$ or~$\fN$, and give rise to the
eigensystem~$\mathcal{E}_i \otimes \chi|_{G_K}$. We already know from previous proofs that there is no system of eigenvalues at levels $6\calO_K$ and~$\fN_0$ matching $\mathcal{E}_i \otimes \chi|_{G_K}$ away from the level, therefore $\mathcal{E}_i \otimes \chi|_{G_K}$ appears at level~$\fN$ and so must be equal to~$\mathcal{E}^\chi_i$. Since we already know the theorem holds for $\mathcal{E}^\chi_i$, twisting it by $\chi|_{G_K}$ yields the theorem for~$\mathcal{E}_i$.\qed

\begin{remark}
At the end of~\S\ref{S:computation}, we have provided a certificate that allows us to assume that all the eigensystems  appear with multiplicity one. Note that we computed the $\F_7$-eigensystems for the Hecke operators in $\mathcal{T}_1$, therefore, if (say) $\mathcal{E}_1^\chi$ appeared with multiplicity 2 we would not be able to decide whether the two copies agree for all Hecke operators or if some prime not in~$\mathcal{T}_1$ distinguishes them. In particular, in the previous proof, the mod~$\fp_0$ eigensystem of the form~$g$ would only show that one of the two copies of the $\mathcal{T}_1$-eigensystem $\mathcal{E}_1^\chi$ is reducible.
\end{remark}


\begin{thebibliography}{10}

\bibitem{BDF25}
Alex~J. Best, Sander~R. Dahmen, and Nuno Freitas.
\newblock On the generalized Fermat equation $x^{13} + y^{13} = z^n$, 2025.
\newblock \href{https://arxiv.org/abs/2510.12092}{arXiv:2510.12092}.

\bibitem{programs}
Nicolas Billerey, Imin Chen, Lassina Demb\'el\'e, Luis Dieulefait, and Nuno
  Freitas.
\newblock Supporting files for this paper,
  \href{https://github.com/NicolasBillerey/LevelRaising}{GitHub}.

\bibitem{BCDDF}
Nicolas Billerey, Imin Chen, Lassina Demb\'el\'e, Luis Dieulefait, and Nuno
  Freitas.
\newblock Some extensions of the modular method and {F}ermat equations of
  signature {$(13,13,n)$}.
\newblock {\em Publ. Mat.}, 67(2):715--741, 2023.

\bibitem{BCDF}
Nicolas Billerey, Imin Chen, Luis Dieulefait, and Nuno Freitas.
\newblock A multi-{F}rey approach to {F}ermat equations of signature
  {$(r,r,p)$}.
\newblock {\em Trans. Amer. Math. Soc.}, 371(12):8651--8677, 2019.

\bibitem{magma}
Wieb Bosma, John Cannon, and Catherine Playoust.
\newblock The {M}agma algebra system. {I}. {T}he user language.
\newblock {\em J. Symbolic Comput.}, 24(3-4):235--265, 1997.
\newblock Computational algebra and number theory (London, 1993).

\bibitem{DahmenSiksek}
Sander~R. Dahmen and Samir Siksek.
\newblock Perfect powers expressible as sums of two fifth or seventh powers.
\newblock {\em Acta Arith.}, 164(1):65--100, 2014.

\bibitem{kw09}
Chandrashekhar Khare and Jean-Pierre Wintenberger.
\newblock On {S}erre's conjecture for 2-dimensional mod {$p$} representations
  of {${\rm Gal}(\overline{\Q}/\Q)$}.
\newblock {\em Ann. of Math. (2)}, 169(1):229--253, 2009.

\bibitem{lmfdb}
The {LMFDB Collaboration}.
\newblock The {L}-functions and modular forms database.
\newblock \url{https://www.lmfdb.org}, 2025.
\newblock [Online; accessed 13 October 2025].

\bibitem{martin}
Kimball Martin.
\newblock The {J}acquet-{L}anglands correspondence, {E}isenstein congruences,
  and integral {$L$}-values in weight 2.
\newblock {\em Math. Res. Lett.}, 24(6):1775--1795, 2017.

\bibitem{Raynaud}
Michel Raynaud.
\newblock Sch\'emas en groupes de type {$(p,\dots, p)$}.
\newblock {\em Bull. Soc. Math. France}, 102:241--280, 1974.

\bibitem{zha01}
Shouwu Zhang.
\newblock Heights of {H}eegner points on {S}himura curves.
\newblock {\em Ann. of Math. (2)}, 153(1):27--147, 2001.

\end{thebibliography}

\end{document}